\journal{International Journal of Non-Linear Mechanics}
\newtheorem{theorem}{Theorem}[section]
\newtheorem{lemma}[theorem]{Lemma}
\theoremstyle{definition}
\theoremstyle{remark}
\newtheorem{remark}[theorem]{Remark}
\begin{document}

\begin{frontmatter}


\title{Averaging method and asymptotic solutions in some mechanical problems}



\author[label5]{Ivan Polekhin}
\address[label5]{Steklov Mathematical Institute of the Russian Academy of Sciences, Moscow, Russia}


\begin{abstract}
In the paper we consider systems in oscillating force fields such that the classical method of averaging can be applied. We present sufficient conditions for the existence of forced oscillations in such systems and study asymptotic behaviour of some solutions. In particular, we show that for an inverted pendulum with a horizontally moving pivot point in an oscillating gravity field there exists a solution along which the pendulum never falls.

\end{abstract}

\begin{keyword}
averaging method \sep Wa{\.z}ewski method \sep forced oscillations \sep inverted pendulum \sep non-autonomous systems


\end{keyword}

\end{frontmatter}


\section{Introduction}
\label{S:1}

One of the approaches to study the dynamics of mechanical systems is to construct some continuous map --- that is easier to understand than the original equations --- and then use it to prove the existence of the required properties in the system. Let us illustrate these general words with an example. Let us have a smooth vector field on $\mathbb{R}^2$ (Fig. 1):

\begin{align}
\begin{split}
\label{eq1}
    &\dot x = v(x, y),\\
    &\dot y = w(x, y).
\end{split}
\end{align}
We suppose that all solutions of this system can be continued for all $t \in \mathbb{R}$. We also suppose that for $y = 1$ we have $w(x,1)>0$ and for $y=-1$ we always have $w(x,-1) < 0$. Then in the strip $y \in (-1,1)$ there exists a solution such that $y(t) \in (-1,1)$ for all $t \geqslant 0$. Indeed, let us consider a segment $x = 0$, $y \in [-1, 1]$ and show that there exists a point in this segment such that the solution starting at this point satisfies the above property. Suppose from the contrary that any solution starting at this segment leaves the strip at some $t \geqslant 0$, i.e. $|y(t)| = 1$. Since the vector field is transverse to the boundary, then we can construct a continuous map between $[-1,1]$ and the boundary of the strip, that is defined by the flow of the system. In other words, if some solution leaves our strip through the part of the boundary where $y = 1$, then all solutions with close initial data also leave our region through the same component of the boundary. It follows from the continuous dependence on the initial data. The boundary has two connected components and the segment is connected. The contradiction proves the statement.

\begin{figure}[!h]
\centering
\begin{minipage}[t]{180px}
  \centering
  \def\svgwidth{180px}\footnotesize
\begingroup%
  \makeatletter%
  \providecommand\color[2][]{%
    \errmessage{(Inkscape) Color is used for the text in Inkscape, but the package 'color.sty' is not loaded}%
    \renewcommand\color[2][]{}%
  }%
  \providecommand\transparent[1]{%
    \errmessage{(Inkscape) Transparency is used (non-zero) for the text in Inkscape, but the package 'transparent.sty' is not loaded}%
    \renewcommand\transparent[1]{}%
  }%
  \providecommand\rotatebox[2]{#2}%
  \newcommand*\fsize{\dimexpr\f@size pt\relax}%
  \newcommand*\lineheight[1]{\fontsize{\fsize}{#1\fsize}\selectfont}%
  \ifx\svgwidth\undefined%
    \setlength{\unitlength}{457.78192692bp}%
    \ifx\svgscale\undefined%
      \relax%
    \else%
      \setlength{\unitlength}{\unitlength * \real{\svgscale}}%
    \fi%
  \else%
    \setlength{\unitlength}{\svgwidth}%
  \fi%
  \global\let\svgwidth\undefined%
  \global\let\svgscale\undefined%
  \makeatother%
  \begin{picture}(1,0.61848836)%
    \lineheight{1}%
    \setlength\tabcolsep{0pt}%
    \put(0,0){\includegraphics[width=\unitlength,page=1]{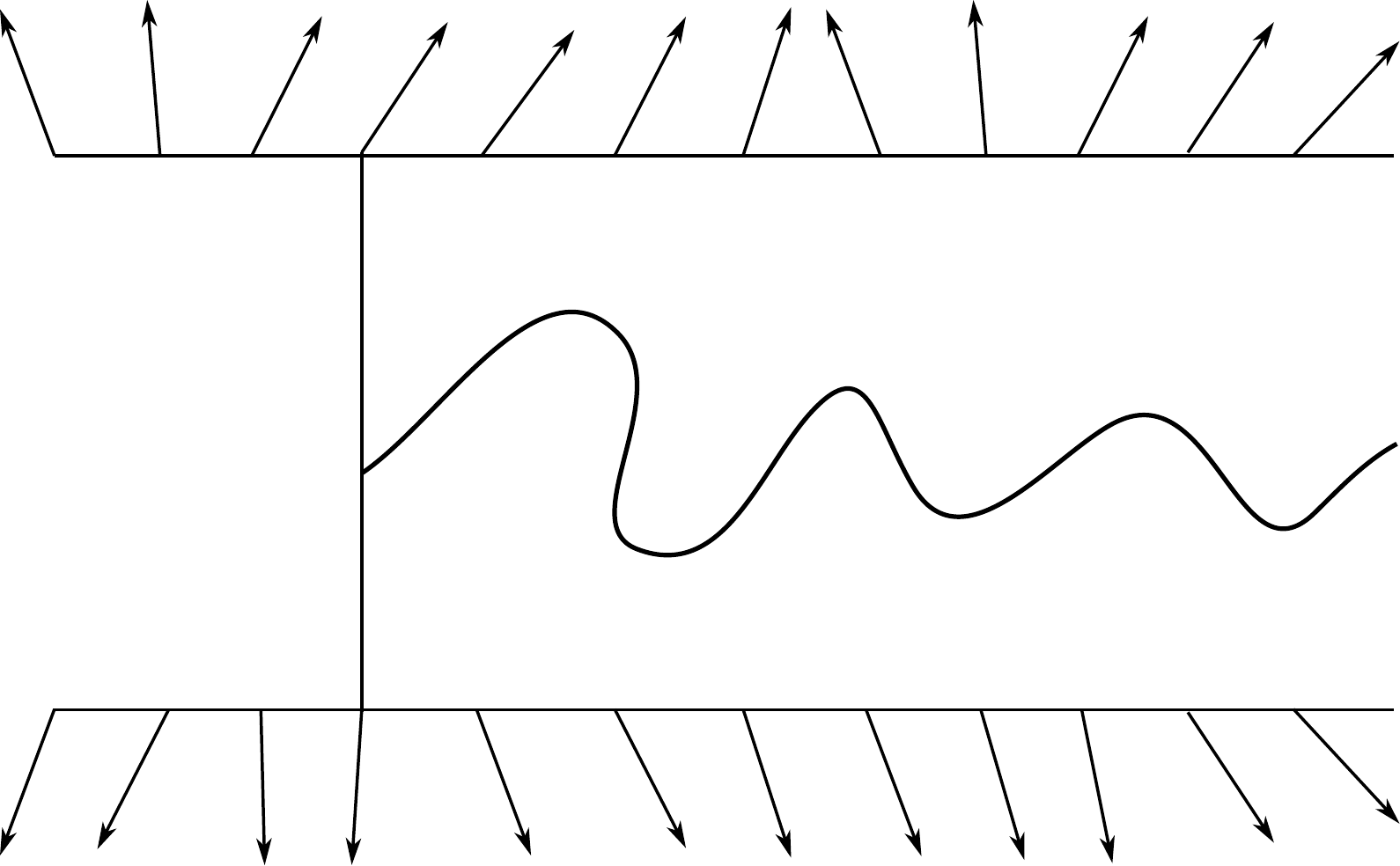}}%
    \put(0.04550089,0.14207025){\color[rgb]{0,0,0}\makebox(0,0)[lt]{\lineheight{1.25}\smash{\begin{tabular}[t]{l}$y=-1$\end{tabular}}}}%
    \put(0.04617746,0.44415131){\color[rgb]{0,0,0}\makebox(0,0)[lt]{\lineheight{1.25}\smash{\begin{tabular}[t]{l}$y=1$\end{tabular}}}}%
    \put(0.26597392,0.2169656){\color[rgb]{0,0,0}\makebox(0,0)[lt]{\lineheight{1.25}\smash{\begin{tabular}[t]{l}$x=0$\end{tabular}}}}%
  \end{picture}%
\endgroup%

    \caption{Schematic presentation of the vector field and solution for which $y(t) \in (-1,1)$.}
    \label{pic3}
\end{minipage}
\hspace{0.1cm}
\begin{minipage}[t]{180px}
  \centering
  \def\svgwidth{180px}\footnotesize
\begingroup%
  \makeatletter%
  \providecommand\color[2][]{%
    \errmessage{(Inkscape) Color is used for the text in Inkscape, but the package 'color.sty' is not loaded}%
    \renewcommand\color[2][]{}%
  }%
  \providecommand\transparent[1]{%
    \errmessage{(Inkscape) Transparency is used (non-zero) for the text in Inkscape, but the package 'transparent.sty' is not loaded}%
    \renewcommand\transparent[1]{}%
  }%
  \providecommand\rotatebox[2]{#2}%
  \newcommand*\fsize{\dimexpr\f@size pt\relax}%
  \newcommand*\lineheight[1]{\fontsize{\fsize}{#1\fsize}\selectfont}%
  \ifx\svgwidth\undefined%
    \setlength{\unitlength}{457.78192677bp}%
    \ifx\svgscale\undefined%
      \relax%
    \else%
      \setlength{\unitlength}{\unitlength * \real{\svgscale}}%
    \fi%
  \else%
    \setlength{\unitlength}{\svgwidth}%
  \fi%
  \global\let\svgwidth\undefined%
  \global\let\svgscale\undefined%
  \makeatother%
  \begin{picture}(1,0.61848836)%
    \lineheight{1}%
    \setlength\tabcolsep{0pt}%
    \put(0,0){\includegraphics[width=\unitlength,page=1]{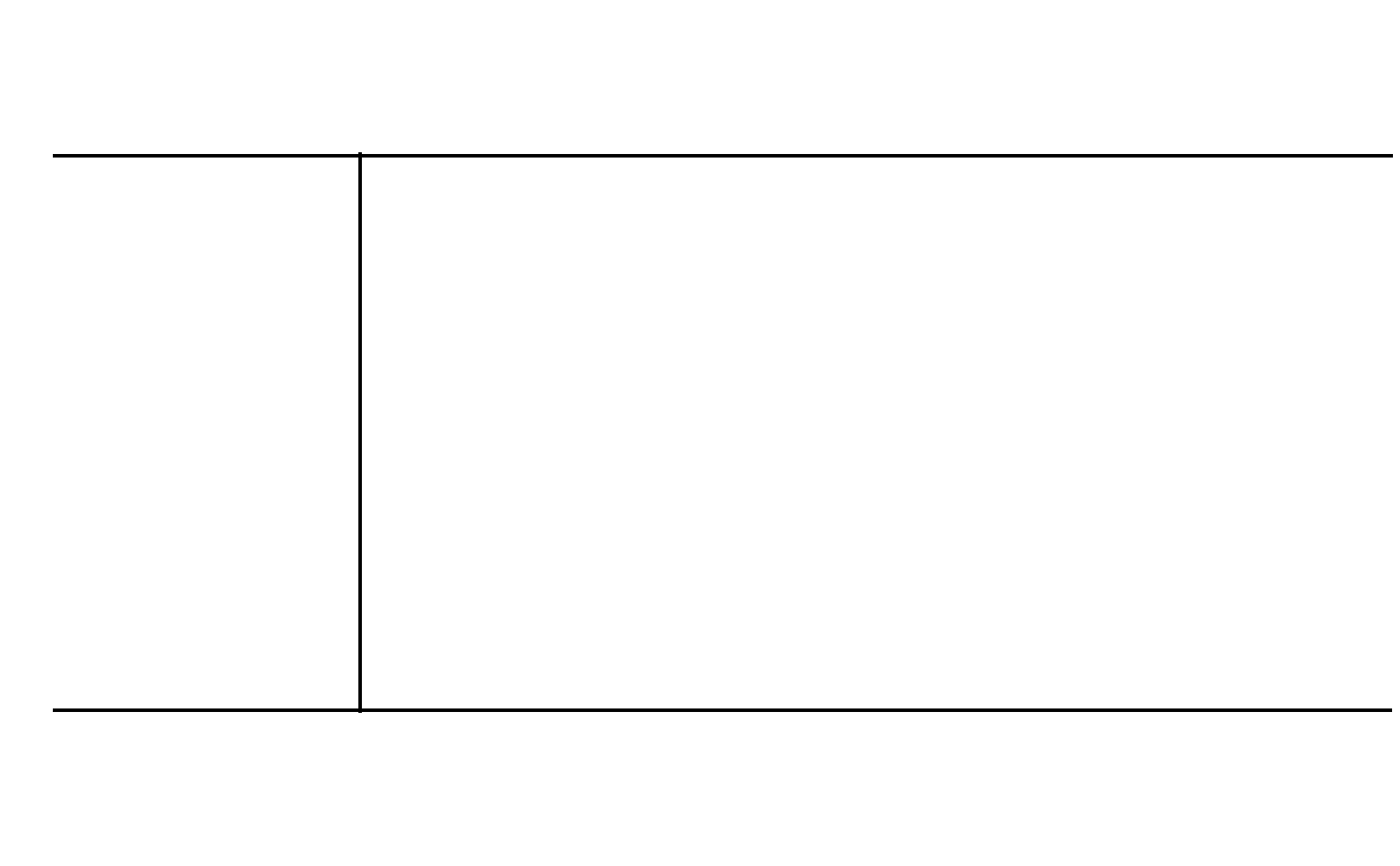}}%
    \put(0.0444739,0.1419079){\color[rgb]{0,0,0}\makebox(0,0)[lt]{\lineheight{1.25}\smash{\begin{tabular}[t]{l}$y=-1$\end{tabular}}}}%
    \put(0.04515046,0.44398895){\color[rgb]{0,0,0}\makebox(0,0)[lt]{\lineheight{1.25}\smash{\begin{tabular}[t]{l}$y=1$\end{tabular}}}}%
    \put(0.26494693,0.21680324){\color[rgb]{0,0,0}\makebox(0,0)[lt]{\lineheight{1.25}\smash{\begin{tabular}[t]{l}$x=0$\end{tabular}}}}%
    \put(0,0){\includegraphics[width=\unitlength,page=2]{fig2a.pdf}}%
  \end{picture}%
\endgroup%

    \caption{The map between the segment and the boundary may become discontinuous if there is an internally tangent solution.}
    \label{pic4}
\end{minipage}
\end{figure}

The above consideration is a simple form of the Wa{\.z}ewski topological method \cite{wazewski1947principe,reissig1963qualitative}. The key element in our proof is the possibility to construct a continuous map to the boundary. For this, it is important for the vector field to be transverse or to be externally tangent to the boundary. Indeed, if there is a solution that is internally tangent to the boundary, the map defined by the flow can be discontinuous (Fig. 2).

Applying the Wa{\.z}ewski method to the system describing an inverted pendulum with a horizontally moving pivot point, it is possible to show that, for a given law of motion of the pivot, there always exists a solution such that the pendulum never falls \cite{polekhin2014examples,polekhin2014periodic}. The same is true for an inverted spherical pendulum.

Based on a theorem from \cite{srzednicki2005fixed}, one can show the existence of a periodic solution without falling \cite{srzednicki2019periodic,polekhin2014examples,polekhin2014periodic, polekhin2015forced,polekhin2016forced}. A stronger result holds for Lagrangian systems \cite{bolotin2015calculus}: there exist several periodic solutions, one for each homotopy class of freely homotopic loops on the configuration space with boundary. The key property used in the proof is the dynamical convexity of the corresponding boundary. In other words, all tangent solutions should be externally tangent.

In the paper, we show how to apply the Wa{\.z}ewski method when the region is not dynamically convex. We will consider specific mechanical systems, however, the results can be generalised easily. In particular, we consider an inverted pendulum with a horizontally moving pivot point  in an oscillating gravity field and the system of a rotating closed curve with a mass point sliding along this curve, also in an oscillating gravity field.

\section{A simple problem}

Let us consider the following modification of system (\ref{eq1}):
\begin{align*}
\begin{split}
     &\dot x = v(x,y),\\
    &\dot y = w(x,y) + \lambda u(x, y, t), \quad \lambda \in \mathbb{R}, \quad |u(x,y,t)| = 1.
\end{split}
\end{align*}
Here and everywhere below all functions are $C^\infty$ smooth. Suppose that $w(x,1)>a\geqslant0$ and $w(x,-1)<-a$ for all $x$. If $|\lambda| \leqslant a$, then we can use the same considerations as above and conclude that there exists a solution that remains in the strip $y \in (-1,1)$ for all $t \geqslant 0$.

For $|\lambda| > a$ the corresponding map to the boundary can be discontinuous and the Wa{\.z}ewski method cannot be applied directly. However, we will show that a result similar to the above one can be proved provided the non-autonomous perturbation satisfies some additional assumptions. Let us now have a system:

\begin{align}
\begin{split}
\label{eq2}
    &\dot x = v(x,y),\\
    &\dot y = w(x,y) + u(\lambda t), \quad \lambda \in \mathbb{R}.
\end{split}
\end{align}
Here we suppose that $u(t)$ is a $T$-periodic function with zero mean value

$$
\bar u = \frac{1}{T}\int_0^T u(t)\, dt = 0.
$$
Below we will use the following version of Bogolyubov's theorem \cite{bogolyubov1961asymptotic,mitropolsky1967averaging}.

\begin{theorem}
Let us have a system
\begin{align}
\label{eq33}
    \dot x = \varepsilon X(t, x), \quad x \in \mathbb{R}^n.
\end{align}
Suppose that $X(t,x)$ satisfies the following conditions
\begin{enumerate}
    \item For some domain $D \subset \mathbb{R}^n$ there exist $M$ and $\mu$ such that for all $t \geqslant 0$ and $x,x',x'' \in D$ we have
    $$
    \| X(t,x) \| \leqslant M, \quad \| X(t, x') - X(t, x'') \| \leqslant \mu \| x' - x'' \|.
    $$
    \item Uniformly on $x$ in $D$
    $$
    \lim_{T \to \infty} \frac{1}{T} \int\limits_0^T X(t,x)\, dt = X_0(x).
    $$
\end{enumerate}
Then for any arbitrarily small $\rho$ and $\delta$ and for arbitrarily large $L$ there exists $\varepsilon_0$ such that, if $x_0(t)$ is a solution of

$$
\dot x_0 = \varepsilon X_0(x_0),
$$
defined for $t_0 < t < \infty$, $t_0 \geqslant 0$ and remains in $D$ with the $\rho$-neighbourhood, then for $0 < \varepsilon < \varepsilon_0$ in $t_0 < t < t_0 + L/\varepsilon$ we have

$$
\| x(t) - x_0(t) \| < \delta,
$$
where $x(t)$ is the solution of the initial equation and $x(t_0) = x_0(t_0)$.
\end{theorem}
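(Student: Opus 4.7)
The plan is to compare the exact solution $x(t)$ with the averaged solution $x_0(t)$ via the difference $z(t)=x(t)-x_0(t)$, which satisfies $z(t_0)=0$ and
\[
z(t)=\varepsilon\int_{t_0}^{t}\!\bigl[X(s,x(s))-X(s,x_0(s))\bigr]\,ds+\varepsilon\int_{t_0}^{t}\!\bigl[X(s,x_0(s))-X_0(x_0(s))\bigr]\,ds.
\]
The first integral is immediately Lipschitz-controlled: its norm is at most $\varepsilon\mu\int_{t_0}^{t}\|z(s)\|\,ds$. So the whole argument reduces to bounding the second integral, and then closing the estimate with Gronwall's inequality to obtain $\|z(t)\|\le C(L)\,\eta(\varepsilon)$ on $[t_0,t_0+L/\varepsilon]$ with $\eta(\varepsilon)\to 0$.

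To bound the second integral I would partition $[t_0,t]$ into consecutive subintervals $[t_i,t_{i+1}]$ of equal length $\Delta$ (to be chosen as a slow function of $\varepsilon$, e.g.\ $\Delta=1/\sqrt{\varepsilon}$), and on each subinterval freeze $x_0$ at the left endpoint. On $[t_i,t_{i+1}]$,
\[
\int_{t_i}^{t_{i+1}}\!\bigl[X(s,x_0(s))-X_0(x_0(s))\bigr]\,ds
=\int_{t_i}^{t_{i+1}}\!\bigl[X(s,x_0(t_i))-X_0(x_0(t_i))\bigr]\,ds+R_i,
\]
where the remainder $R_i$ is controlled using the Lipschitz constants of $X$ and $X_0$ together with $\|x_0(s)-x_0(t_i)\|\le \varepsilon M\Delta$. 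Thus $|R_i|\le c\,\mu\varepsilon M\Delta^2$, and summing over the $\sim L/(\varepsilon\Delta)$ subintervals gives a total remainder $O(\mu M L\Delta)$, which is small if $\Delta$ is small.

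The principal terms $\int_{t_i}^{t_{i+1}}[X(s,x_0(t_i))-X_0(x_0(t_i))]\,ds$ are where hypothesis (2) enters: by the uniform convergence of the time averages to $X_0$ on $D$, for each $i$ this integral is bounded by $\Delta\,\eta(\Delta)$ with $\eta(\Delta)\to0$ as $\Delta\to\infty$, uniformly in the frozen point $x_0(t_i)\in D$ (which remains in $D$ because $x_0$ stays in its $\rho$-neighbourhood). Summing over all subintervals yields a bound of order $(L/\varepsilon)\,\eta(\Delta)$, so after multiplication by the prefactor $\varepsilon$ in $z$ we get $L\,\eta(\Delta)$.

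Combining the two bounds,
\[
\|z(t)\|\le\varepsilon\mu\int_{t_0}^{t}\|z(s)\|\,ds+C\bigl(L\Delta+L\,\eta(\Delta)\bigr),
\]
and Gronwall's inequality gives $\|z(t)\|\le C\bigl(L\Delta+L\,\eta(\Delta)\bigr)e^{\mu L}$ on $[t_0,t_0+L/\varepsilon]$. Given $\delta>0$, I first pick $\Delta$ so large that $CL\,\eta(\Delta)e^{\mu L}<\delta/2$, then choose $\varepsilon_0$ small enough that $\varepsilon_0\Delta$ is small enough to make the remainder part below $\delta/2$ and to guarantee that $x(t)$ does not leave $D$ before time $t_0+L/\varepsilon$ (so that the Lipschitz bound is legitimately applicable throughout). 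I expect the main technical nuisance to be this last coupling: one must verify a posteriori, or via a standard continuation argument, that the exact trajectory $x(t)$ stays within $D$ for as long as we have used the Lipschitz hypothesis, exploiting precisely the fact that $x_0(t)$ lies in $D$ together with its $\rho$-neighbourhood and that $\|z(t)\|<\rho$ once $\varepsilon$ is small.
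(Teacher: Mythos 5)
The paper does not actually prove this statement: it is Bogolyubov's first averaging theorem, quoted with references to Bogolyubov--Mitropolsky, so there is no in-paper proof to compare against. Your architecture --- split the error into a Lipschitz part and an averaging part, close with Gronwall, partition-and-freeze on the averaging part, and bootstrap to keep $x(t)$ inside $D$ --- is the standard Besjes-type argument, and most of it is sound (the remainder estimate $|R_i|\le c\mu\varepsilon M\Delta^2$, the count of subintervals, the final continuation argument; the displayed Gronwall bound should read $C(L\varepsilon\Delta+L\,\eta(\Delta))$ rather than $C(L\Delta+L\,\eta(\Delta))$, but your choice of $\varepsilon_0$ shows you meant that).

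There is, however, a genuine gap in the treatment of the principal terms. You claim that $\bigl\|\int_{t_i}^{t_{i+1}}[X(s,x)-X_0(x)]\,ds\bigr\|\le\Delta\,\eta(\Delta)$ uniformly in the frozen point \emph{and} in the left endpoint $t_i$, ``by the uniform convergence of the time averages.'' Hypothesis (2) controls only averages over windows anchored at $t=0$: writing $\phi(T,x)=\int_0^T[X(s,x)-X_0(x)]\,ds$, it gives $\phi(T,x)=o(T)$, so your windowed integral $\phi(t_{i+1},x)-\phi(t_i,x)$ is only $o(t_i)$, which for $t_i\sim L/\varepsilon$ can be far larger than $\Delta$. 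Concretely, $X(t,x)=\sin\sqrt{t}$ satisfies hypothesis (2) with $X_0=0$, yet $\int_{t_i}^{t_i+\Delta}\sin\sqrt{s}\,ds\approx\Delta\sin\sqrt{t_i}$ once $t_i\gg\Delta^2$, i.e.\ of order $\Delta$ and not $o(\Delta)$; the theorem is still true for this example, but your bound on each window fails, and a crude Abel resummation of $\sum_i[\phi(t_{i+1},x_i)-\phi(t_i,x_i)]$ does not rescue it because the cross terms $\phi(t_i,x_{i-1})-\phi(t_i,x_i)$ only admit the bound $2\mu t_i\varepsilon M\Delta$, whose sum is $O(L^2)$ after multiplying by $\varepsilon$. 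Your argument does close verbatim when $X$ is $T$-periodic in $t$ (take $\Delta=T$: each principal term vanishes exactly), which is the only case the paper uses; but as a proof of the theorem as stated it has a hole. The classical proof avoids windowed averages entirely: one shows $\varepsilon\sup_{0\le t\le L/\varepsilon}\sup_{x\in D}\|\phi(t,x)\|\to0$ as $\varepsilon\to0$ and uses a near-identity transformation, in the general case of the damped form $u_\zeta(t,x)=\int_0^t e^{-\zeta(t-s)}[X(s,x)-X_0(x)]\,ds$ with $\zeta=\zeta(\varepsilon)\to0$ chosen suitably.
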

\begin{remark}
As usual, we can consider the change of time variable $t \mapsto t/\varepsilon = \lambda t$ and rewrite (\ref{eq33}) as
$$
\dot x = X(\lambda t, x).
$$
\end{remark}
Let us prove now that in (\ref{eq2}), for $\lambda$ large enough, there exists a solution that always remains in $y \in (-1,1)$. To be more precise, the following result holds.
\begin{theorem}
Let functions $v(x,y)$, $w(x,y)$ and $u(t)$ satisfy the following conditions:
\begin{enumerate}
    \item There exists positive $\delta$ such that $v(x,y)$ and $w(x,y)$ and their first derivatives are bounded for $y \in (-\delta + 1, 1 + \delta) \cup (-\delta - 1, - 1 + \delta)$ and for all $x$.
    \item There exists positive $a$ such that
    $$
    w(x,1) > a, \quad w(x,-1) < -a, \quad\mbox{for all}\quad x \in \mathbb{R}.
    $$
    \item Function $u(t)$ is $T$-periodic and the mean value $\bar u$ is zero.
\end{enumerate}
Suppose also that all solutions of (\ref{eq2}) starting in the region where $|y| \leqslant 1$ can be continued for all $t \geqslant 0$. Then there exists a solution $(x(t),y(t))$ of (\ref{eq2}) for which $|y(t)| < 1$ for all $t \geqslant 0$.
\end{theorem}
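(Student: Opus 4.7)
My plan is to eliminate the fast oscillation by a near-identity change of the $y$ coordinate, after which the elementary Wa{\.z}ewski argument recalled in the introduction applies on a slightly contracted strip. Since $\bar u = 0$, the primitive $U(s) = \int_0^s u(\tau)\,d\tau$ is $T$-periodic and hence bounded, say $|U|\le M$. Setting $\tilde y = y - U(\lambda t)/\lambda$, one has $\dot{\tilde y} = \dot y - u(\lambda t) = w(x,y)$, so that (\ref{eq2}) rewrites as
\begin{align*}
    \dot x &= v\bigl(x,\tilde y + U(\lambda t)/\lambda\bigr),\\
    \dot{\tilde y} &= w\bigl(x,\tilde y + U(\lambda t)/\lambda\bigr),
\end{align*}
in which the non-autonomous perturbation enters only through an $O(1/\lambda)$ shift of the spatial argument of $v$ and $w$.

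Next, for $\lambda$ large I would work with the time-independent strip $|\tilde y|\le 1-M/\lambda$. On its upper boundary the point $y = \tilde y + U(\lambda t)/\lambda$ lies within distance $2M/\lambda$ of $1$, uniformly in $t$ and $x$. Combining hypothesis (2), $w(x,1)>a$, with the uniform Lipschitz bound on $w$ in $y$ near $y = 1$ supplied by the bounded first derivatives of hypothesis (1), I obtain $\dot{\tilde y} = w(x,y) \ge a - 2KM/\lambda > a/2$ on the upper boundary, and symmetrically $\dot{\tilde y} < -a/2$ on the lower boundary. Thus for $\lambda$ sufficiently large the vector field is uniformly transverse to the boundary of this fixed strip, pointing outward on both sides.

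Once transversality is established, the argument from the introduction transfers verbatim to the non-autonomous setting. Consider the segment $x = 0$, $\tilde y \in [-(1-M/\lambda),\,1-M/\lambda]$; if every solution starting on it eventually left the strip in forward time, the uniform outward transversality together with continuous dependence on initial data would make the exit boundary-component a continuous function from this connected segment onto the two-point set of boundary components, while the endpoints of the segment are forced to map to different components — a contradiction. Any initial datum not in this exit set therefore produces a solution with $|\tilde y(t)| < 1 - M/\lambda$ for all $t\ge 0$, whence $|y(t)| \le |\tilde y(t)| + M/\lambda < 1$ for all $t \ge 0$, as required.

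The main obstacle I foresee is conceptual rather than computational: the Wa{\.z}ewski argument as presented in the introduction is stated for autonomous planar fields, while here the system remains non-autonomous after the change of variables. The hypotheses of global forward existence together with the boundedness of the first derivatives of $v$, $w$ near $y = \pm 1$ are precisely what is needed to promote the autonomous reasoning to the non-autonomous one, since they yield continuous dependence on initial data on every finite interval and make the exit time continuous wherever it is finite. Note that Bogolyubov's theorem is not directly invoked here — the primitive $U(\lambda t)/\lambda$ already performs the averaging — although its role will presumably become essential in the more elaborate mechanical examples considered later in the paper.
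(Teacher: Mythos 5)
Your proof is correct, but it takes a genuinely different route from the paper. The paper never removes the oscillation by a change of variables: it multiplies $u(\lambda t)$ by a cutoff $\sigma(y)$ vanishing near $y=\pm 1$, applies the Wa{\.z}ewski argument to this modified system (which is transverse at the boundary by construction), and then invokes Bogolyubov's averaging theorem (Theorem 2.1) to show that any solution of the modified system entering the cutoff collar must track the unperturbed system closely enough over a fixed time $\tau$ to exit the strip --- hence the surviving solution never meets the modified region and solves the original equation. Your near-identity substitution $\tilde y = y - U(\lambda t)/\lambda$ is more elementary and self-contained: it turns the forcing into an $O(1/\lambda)$ shift of the spatial argument, restores uniform outward transversality on the contracted strip $|\tilde y|\leqslant 1-M/\lambda$, and lets the introduction's connectedness argument apply directly, with an explicit threshold for $\lambda$ (essentially $\lambda > 2KM/a$) that the paper's compactness-based argument does not provide. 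What your shortcut gives up is exactly what the paper's scheme is designed for: it exploits the fact that in (\ref{eq2}) the perturbation $u(\lambda t)$ is independent of $(x,y)$, so a single antiderivative kills it; in the mechanical examples of Section 3 the fast term enters as $g(\lambda t)\cos x$ or multiplies a state-dependent factor, and the cutoff-plus-averaging template is the one that transfers, which is presumably why the paper presents it here as the model argument. One small point worth making explicit in your write-up: the theorem's hypothesis of global forward continuability applies to your solutions because $|\tilde y(0)|\leqslant 1-M/\lambda$ forces $|y(0)|\leqslant 1$, and the uniform transversality is what makes the first-exit-component map continuous in the non-autonomous setting --- both of which you correctly identify but should state as part of the Wa{\.z}ewski step rather than as a foreseen obstacle.
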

\begin{proof}
There exists a small number $\delta_1 \in (0, \delta)$ such that for every solution $(\tilde x(t), \tilde y(t))$ of system (\ref{eq1}) with the initial data $\tilde x(t_0) = x_0$, $\tilde y(t_0) = y_0 \in [-\delta_1 + 1, 1]$, $t_0 \geqslant 0$ we have $\tilde y(t_0 + \tau) \geqslant 1 + \delta_1 + \varepsilon_1$ for some $\tau > 0$ and $\varepsilon_1 > 0$, one for all considered solutions. It follows from the first two conditions of the theorem. The same is true for $y = -1$, i.e. for any solution with initial data $x_0$, $y_0 \in [\delta_1 - 1, -1]$ we have $\tilde y(t_0 + \tau) \leqslant -1 - \delta_1 - \varepsilon_1$.

Let us now consider the following modified system
\begin{align}
\begin{split}
\label{eq3}
    &\dot x = v(x,y),\\
    &\dot y = w(x,y) + \sigma(y)\cdot u(\lambda t),
\end{split}
\end{align}
where $\sigma(y)$ has the form:
$$
\sigma(y)=
\begin{cases}
& 0, \quad y \in [-1-\delta_1/2,-1 + \delta_1/2] \cup [1-\delta_1/2,1 + \delta_1/2],\\
& 1, \quad y \in (-\infty, -1-\delta_1] \cup [-1 +\delta_1, 1 - \delta_1] \cup [1+\delta_1, +\infty),\\
&\mbox{monotone in all other intervals}.
\end{cases}
$$
In other words, for the modified system (\ref{eq3}) the non-autonomous perturbation $u(\lambda t)$ is zero in the vicinity of lines $y = \pm 1$. Therefore, for this system there exists an initial condition $x(0) = 0$ and $y(0) = y_0 \in (-1,1)$ such that the corresponding solution $(x(t), y(t))$ always remains in the region where $y \in (-1,1)$.

Let us now show that the same is true for the original system for large $\lambda > 0$. From Theorem 2.1 we have that for large $\lambda > 0$ for any solution $(\tilde x(t), \tilde y(t))$ of (\ref{eq1}) and for any solution $(x(t), y(t))$ of (\ref{eq3}) we obtain
$$
| \tilde x(t) - x(t) | + |\tilde y(t) - y(t)| < \varepsilon_1, \quad t \in [t_0, t_0 + \tau]
$$
provided $\tilde x(t_0) = x(t_0)$ and $\tilde y(t_0) = y(t_0) = y_0 \in [-1, -1 + \delta_1] \cup [1 - \delta_1, 1]$. Suppose that for the solution of (\ref{eq3}) that always remains in $y \in (-1,1)$, for some $t_0\geqslant 0$, we have $y(t_0) \in [-1,-1+\delta_1] \cup [1,1+\delta_1]$. Then we have $y(t_0 + \tau) > 1$ or $y(t_0 + \tau) < -1$. From this contradiction we obtain that solution $(x(t),y(t))$ cannot reach the region in which the original equation (\ref{eq2}) is modified.
\end{proof}

In the conclusion of the section, we would like to outline once again the idea of the proof. Let us have a system such that we can prove --- by means of the Wa{\.z}ewski method --- the existence of a solution that always remains in some region. Let us now have a perturbed system such that the Wa{\.z}ewski method cannot be applied because some solutions become externally tangent to the boundary of the region. However, we assume that the perturbation is oscillating and a theorem on (local) averaging can be applied in a vicinity of the boundary. Next we locally modify our vector field such that the Wa{\.z}ewski method can be applied to the modified system. At the same time, from the averaging, we obtain that all solutions, starting at the vicinity of the boundary where the system is modified, leave the considered region. Therefore, we obtain the following. First, from the Wa{\.z}ewski method, we have a solution of the modified system that always remains in the considered region. Second, this solution cannot be in the region where the system is modified, since in this case, the solution leaves the region. Finally, this solution of the modified problem never goes through the modified vicinity of the boundary and we conclude that the same solution exists in the original perturbed problem.

\section{Main results}
In this section we consider several mechanical systems for which we will prove results similar to Theorem 2.3 and also will study the existence of periodic solutions.

Let us consider the following equation
\begin{align}
\label{eq5}
    \ddot x = f(t) \sin x - \cos x.
\end{align}
This equation describes the motion of a pendulum in a gravity field in the presence of a non-autonomous horizontal force. We can interpret (\ref{eq5}) as the equation of motion for a pendulum with a horizontally moving pivot point.

Now suppose that the gravity field is also non-autonomous and oscillates
\begin{align}
\label{eq6}
    \ddot x = f(t) \sin x - (1 + g(\lambda t))\cos x.
\end{align}
\begin{theorem}
Suppose that $f(t)$ and $\dot f(t)$ are bounded, $g(t)$ is periodic and $\bar g = 0$. Then there exists $\lambda_0$ such that, for all $\lambda \geqslant \lambda_0$, equation (\ref{eq6}) has a solution $x(t)$ satisfying $x(t) \in (0, \pi)$ for all $t > 0$.
\end{theorem}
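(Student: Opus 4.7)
The plan is to adapt the modification-and-averaging strategy of Theorem 2.3 to this second-order problem. Write (\ref{eq6}) as the planar system $\dot x = y$, $\dot y = f(t)\sin x - (1+g(\lambda t))\cos x$ on the strip $\Omega := \{(x,y) : x \in (0, \pi)\}$. The boundary is transverse to the flow away from the tangency points $(0,0)$ and $(\pi, 0)$; at those points the unperturbed equation ($g \equiv 0$) gives $\ddot x = \mp 1$, i.e.\ external tangency, and Wa\.zewski's method yields a non-falling solution in the unperturbed case (\cite{polekhin2014examples,polekhin2014periodic}). When $g(\lambda t) < -1$ the tangency becomes internal and Wa\.zewski fails directly; the role of Theorem 2.1 is to repair the argument by averaging out the fast oscillation.

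Before modifying anything I would record an a priori bound: since $|\ddot x| \leqslant M_0 := 1 + \|f\|_\infty + \|g\|_\infty$, any trajectory remaining in $\Omega$ must satisfy $|y(t)| < Y_{\max} := \sqrt{2 M_0 \pi}$ (otherwise $x$ would traverse the strip before the bounded $\dot y$ could reverse $y$), so everything relevant happens in the compact set $K := [0,\pi] \times [-Y_{\max}, Y_{\max}]$. Then fix $\delta_1 > 0$ small enough that $f(t)\sin x - \cos x < -1/4$ for every $t$ and every $x \in [-\delta_1, \delta_1]$ (possible because $\|f\|_\infty < \infty$ and $\cos\delta_1 > 1/2$), and symmetrically $> 1/4$ near $x = \pi$. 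Pick a smooth cutoff $\sigma(x) \in [0,1]$ with $\sigma \equiv 0$ on a neighbourhood of $\{x = 0,\pi\}$ and $\sigma \equiv 1$ on $[\delta_1, \pi - \delta_1]$, and consider the modified equation
\begin{equation*}
    \ddot x = f(t)\sin x - (1 + \sigma(x)\, g(\lambda t))\cos x.
\end{equation*}
Because $\sigma \equiv 0$ near the boundary, the modified system has external tangency at $x \in \{0,\pi\}$ for every $t$, and Wa\.zewski applied to the initial segment $\{(x_0, 0) : x_0 \in (0,\pi)\}$ produces a solution $(x^*(t), y^*(t))$ of the modified equation with $x^*(t) \in (0, \pi)$ for all $t \geqslant 0$.

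The main obstacle --- the step that has no counterpart in the one-dimensional Theorem 2.3 --- is to show that $x^*(t) \in [\delta_1, \pi - \delta_1]$ for every $t \geqslant 0$, so that $\sigma \equiv 1$ along the trajectory and $x^*$ in fact solves the original equation (\ref{eq6}). Suppose, for contradiction, that at some $t_0 \geqslant 0$ we have $x^*(t_0) \in (0, \delta_1]$, with $y^*(t_0) \leqslant 0$ (automatic: either $t_0 > 0$ is a first entry time into the slab, or $t_0 = 0$ and $y^*(0) = 0$ by choice of the initial segment). Let $(\tilde x, \tilde y)$ denote the unperturbed ($g \equiv 0$) solution with initial data $(x^*(t_0), y^*(t_0))$ at time $t_0$. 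The inequality $\ddot{\tilde x} < -1/4$ while $\tilde x \in [-\delta_1, \delta_1]$ forces $\tilde y$ to decrease, so $\tilde x$ decreases and direct integration gives $\tilde x(t_0 + \tau) \leqslant -\delta_1$ for $\tau := 4\sqrt{\delta_1}$, with a margin that is uniform in the choice of $t_0$ and of the initial data on the slab. Now apply Theorem 2.1 on $K$ --- treating $g(\lambda t)$ as the fast zero-mean oscillation and $f(t)$ as a slowly varying parameter, which is legitimate because $\|\dot f\|_\infty < \infty$ --- to conclude $|x^*(t) - \tilde x(t)| < \delta_1/2$ on $[t_0, t_0 + \tau]$ for $\lambda$ sufficiently large. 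Hence $x^*(t_0 + \tau) < -\delta_1/2 < 0$, contradicting $x^*(t) \in (0, \pi)$. The symmetric argument disposes of entries into $(\pi - \delta_1, \pi)$, completing the proof. The delicate point, which I would write out carefully, is the application of Theorem 2.1 in the presence of both a slow non-autonomous term $f(t)$ and a fast oscillation $g(\lambda t)$; the cleanest justification is the substitution $y \mapsto y + \lambda^{-1} G(\lambda t)\sigma(x)\cos x$ with $G' = g$, which absorbs the oscillating part into a correction of order $1/\lambda$ on the compact set $K$ and reduces the matter to the classical Bogolyubov estimate.
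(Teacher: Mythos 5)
Your proposal follows essentially the same route as the paper's proof: cut off the fast oscillation near the tangency set so that Wa\.zewski's method applies to the modified equation, then use the Bogolyubov averaging theorem to show that the resulting non-falling solution can never enter the modified region (since there it would shadow a solution of $\ddot x = f(t)\sin x - \cos x$, which exits the strip), and hence solves the original equation. The only differences are matters of bookkeeping --- your cutoff depends on $x$ alone rather than on $(x,\dot x)$, and you add an a priori velocity bound and the near-identity substitution to justify the averaging step in the presence of the slow term $f(t)$, points the paper leaves implicit --- so the argument is correct and matches the paper's.
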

\begin{proof}
First, let $\delta > 0$ be such that any solution $\tilde{x}(t)$ of (\ref{eq5}) starting at $\tilde x_0 = \tilde x(t_0) \in [0, \delta] \cup [\pi-\delta, \pi]$, $\dot{\tilde{x}}_0 = \dot{\tilde{x}}(t_0) \in [-\delta, \delta]$ satisfies the following condition. There exist $\varepsilon >0$, $\tau > 0$, same for all considered initial conditions $t_0$, $\tilde{x}_0$ and $\dot{\tilde{x}}_0$, such that $\tilde{x}(t_0 + t') = -\varepsilon$ or $\tilde{x}(t_0 + t') = \pi + \varepsilon$ for some $t' \in [0, \tau]$. The existence of such $\delta$ follows from continuous dependence of solutions from the right-hand side for system (\ref{eq5}). Therefore, for $x$ close to $0$, the solutions of (\ref{eq5}) are close to the solutions of $\ddot x = -1$. Similarly, for $x$ close to $\pi$, we have equation $\ddot x = 1$.

Now consider a modified version of equation (\ref{eq6}):
\begin{align}
\label{eq7}
    \ddot x = f(t) \sin x - (1 + \sigma(x,\dot x)g(\lambda t))\cos x.
\end{align}
Here $\sigma(x, \dot x)$ is a smooth function with bounded derivatives such that
$$
\sigma(x, \dot x)=
\begin{cases}
& 0, \quad x \in [-\delta/2,\delta/2] \cup [\pi-\delta/2,\pi + \delta/2] \mbox{ and } \dot x \in [-\delta/2,\delta/2],\\
& 1, \quad x \not\in [-\delta,\delta] \cup [\pi-\delta,\pi + \delta] \mbox{ or } \dot x \not\in [-\delta,\delta],\\
&\mbox{smooth elsewhere}.
\end{cases}
$$
Applying the Wa{\.z}ewski method, it can be easily shown \cite{polekhin2014examples,polekhin2014periodic} that for any $\lambda$ equation (\ref{eq7}) has a solution $x(t)$ satisfying $x(t) \in (0, \pi)$ for all $t > 0$. Equations (\ref{eq6}) and (\ref{eq7}) differs only for $x \in [-\delta,\delta] \cup [\pi-\delta,\pi + \delta]$ and $\dot x \in [-\delta,\delta]$.

From Theorem 2.1 we have that for large $\lambda > 0$ for any solution $\tilde{x}(t)$ of (\ref{eq5}) and for any solution $x(t)$ of (\ref{eq7}) we have
$$
|x(t) - \tilde{x}(t)| < \varepsilon/2, \quad |\dot x(t) - \dot{\tilde{x}}(t)| < \varepsilon/2, \quad t \in [t_0, t_0 + \tau]
$$
provided $\tilde x(t_0) = x(t_0) = x_0 \in [0 ,\delta] \cup [\pi - \delta, \pi]$ and $\dot{\tilde{x}}(t_0) = x(t_0) = \dot x_0 \in [-\delta, \delta]$. Suppose that for the solution $x(t)$ of (\ref{eq7}) that always remains in $x \in (0,\pi)$, for some $t_0\geqslant 0$, we have $x(t_0)  \in [0 ,\delta] \cup [\pi - \delta, \pi]$ and $x(t_0) = \in [-\delta, \delta]$. Then we have $x(t') > \pi$ or $x(t') < 0$ for some $t' > t_0$. From this contradiction we obtain that solution $x(t)$ cannot reach the region in which the original equation (\ref{eq6}) is modified.
\end{proof}

\begin{remark}
In the above considerations we omitted the part containing the proof of the fact that equation (\ref{eq7}) has a solution $x(t)$ such that $x(t) \in (0, \pi)$ for $t > 0$. However, the same method can be applied to the next example, for which we consider the proof in detail.
\end{remark}

Let us have a smooth strictly convex closed curve in a vertical plane. Consider a point moving along this curve without any friction. Suppose that the curve is rotating in the vertical plane around a fixed point $O$. We will denote the angle of rotation by $\varphi$. For any $\varphi$, there are two points on the curve ($L(\varphi)$ and $R(\varphi)$ correspondigly) where the tangent line is vertical. Since the curve is strictly convex, these points depend smoothly on the angle $\varphi$. Moreover, we assume that for some $c$
\begin{align}
\label{phi_cond}
|\dot \varphi(t)| < c, \quad |\ddot \varphi(t)| < c \quad\mbox{for all }t.
\end{align}
Let $s$ define the natural parametrization of the curve. We denote the coordinates of the curve w.r.t. coordinate frame $O\xi\eta$ by $\xi(s)$ and $\eta(s)$. Below we suppose that the mass of the point and the gravity acceleration equal $1$. The kinetic energy has the form:
\begin{align*}
    T = \frac{1}{2}\left( \dot s^2 + \dot\varphi^2(\xi^2 + \eta^2) + 2\dot\varphi\dot s (\xi\eta' - \eta\xi') \right).
\end{align*}
The potential has the form:
\begin{align*}
    V = - (\xi\sin\varphi + \eta \cos\varphi).
\end{align*}
The dynamics of the system is described by the following equation:
\begin{align}
\label{eq8}
    \ddot s + \ddot\varphi(\xi\eta' - \eta\xi') - \dot\varphi^2(\xi\xi' + \eta\eta') +  (\xi'\sin\varphi + \eta' \cos\varphi) = 0.
\end{align}
For any $t$, there exist two points $s_1(t)$ ($L(\varphi)$) and $s_2(t)$ ($R(\varphi)$) such that
\begin{align*}
    \begin{split}
        & \xi' (s_1(t)) \sin\varphi(t) + \eta'(s_1(t)) \cos\varphi(t) = -1,\\
        & \xi' (s_2(t)) \sin\varphi(t) + \eta'(s_2(t)) \cos\varphi(t) = 1.
    \end{split}
\end{align*}
We suppose that $s_1(t) < s_2(t)$. For a given function $\varphi(t)$, functions $s_1(t)$ and $s_2(t)$ are smooth and determined uniquely. Let us consider the following modified equation
\begin{align}
\label{eq9}
    \ddot s + \ddot\varphi(\xi\eta' - \eta\xi') - \dot\varphi^2(\xi\xi' + \eta\eta') +  (1 + \sigma(t, s, \dot s)g(\lambda t))(\xi'\sin\varphi + \eta' \cos\varphi) = 0.
\end{align}

\begin{lemma}
There exists $c_0 > 0$ such that for any $0 < c < c_0$ and any functions $\varphi$, $\sigma$ satisfying (\ref{phi_cond}) and the following condition: $\sigma(t, s, \dot s) = 0$ for $s = s_1(t)$, $\dot s = \dot s_1(t)$ and for $s = s_2(t)$, $\dot s = \dot s_2(t)$, there is a solution of (\ref{eq9}) satisfying $s(t) \in (s_1(t), s_2(t))$ for all $t > 0$.
\end{lemma}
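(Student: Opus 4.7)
The plan is to apply the Wa\.zewski topological method in the extended phase space $(t, s, \dot s)$ to the region $W = \{(t, s, \dot s) : t \geqslant 0,\; s_1(t) \leqslant s \leqslant s_2(t)\}$. The hypothesis that $\sigma$ vanishes on the two tangency curves $(s, \dot s) = (s_i(t), \dot s_i(t))$ is precisely what makes $W$ dynamically convex, i.e.\ all solutions tangent to $\partial W$ are externally tangent; this is the ingredient needed to produce a trajectory that never leaves $W$.

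First I would verify external tangency of the two lateral boundary components. Set $\phi_1 = s - s_1(t)$ and $\phi_2 = s - s_2(t)$; it suffices to show $\ddot\phi_1 > 0$ whenever $\phi_1 = \dot\phi_1 = 0$, and $\ddot\phi_2 < 0$ whenever $\phi_2 = \dot\phi_2 = 0$. At a tangency to the left boundary $\sigma = 0$ by hypothesis and $\xi'\sin\varphi + \eta'\cos\varphi = -1$, so from (\ref{eq9}) one obtains
$$
\ddot s = 1 - \ddot\varphi\,(\xi\eta' - \eta\xi') + \dot\varphi^2\,(\xi\xi' + \eta\eta'),
$$
with all curve quantities evaluated at $s = s_1$. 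Boundedness of $\xi, \eta, \xi', \eta'$ on the compact curve, together with (\ref{phi_cond}), yields $\ddot s = 1 + O(c)$. Differentiating $\dot s_1 = s_1'(\varphi)\,\dot\varphi$ gives $\ddot s_1 = s_1''(\varphi)\,\dot\varphi^2 + s_1'(\varphi)\,\ddot\varphi = O(c)$. Hence $\ddot\phi_1 = 1 + O(c) > 0$ for all $c$ below some $c_0$ depending only on the curve. The symmetric computation at right-boundary tangencies gives $\ddot\phi_2 = -1 + O(c) < 0$.

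With dynamical convexity established, I would argue topologically as in the inverted-pendulum case of \cite{polekhin2014examples,polekhin2014periodic}. Consider a one-parameter family of initial data $\Sigma_0 = \{(0, s_0, \dot s_0(s_0)) : s_0 \in [s_1(0), s_2(0)]\}$, with $\dot s_0$ chosen so that at $s_0 = s_1(0)$ the solution exits $W$ immediately through $\{s = s_1(t)\}$ (e.g.\ $\dot s_0(s_1(0)) < \dot s_1(0)$) and at $s_0 = s_2(0)$ it exits immediately through $\{s = s_2(t)\}$. Suppose for contradiction that every trajectory starting in $\Sigma_0$ eventually leaves $W$. The flow then induces a map from the connected segment $\Sigma_0$ to the exit set $W^-$, which decomposes into two disjoint connected components, one on each lateral face. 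External tangency combined with continuous dependence on initial data makes the exit-side assignment continuous: a small perturbation of an exiting trajectory cannot change the exit side without passing through a tangential point, but such points are not exits. Continuity and connectedness then force $\Sigma_0$ to map into a single component, contradicting the choice of the two endpoints. Hence some initial condition in $\Sigma_0$ produces a solution staying in $W$ for all $t \geqslant 0$, and the strict inequalities $\ddot\phi_i \neq 0$ at tangencies promote the inclusion to $s(t) \in (s_1(t), s_2(t))$.

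The hard part will be justifying continuity of the exit-side assignment rigorously in the presence of grazing trajectories: a trajectory in the interior of $W$ may touch $\partial W$ tangentially at some $t^* > 0$, and nearby trajectories may either stay strictly inside or exit transversally on either side of the touch point. External tangency guarantees no exit at $t^*$ itself, but one must still show, via this condition, that the partition of exiting initial data by exit side is locally constant outside a topologically negligible set of grazing configurations. This is the technical core of the Wa\.zewski-type argument and follows the scheme applied to the inverted pendulum in the cited works, now adapted to the time-dependent lateral boundaries $s_i(t)$ whose derivatives are controlled by~(\ref{phi_cond}).
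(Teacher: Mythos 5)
Your proposal follows essentially the same route as the paper's proof: establish that all tangencies to $\partial W$ at the curves $(s_i(t),\dot s_i(t))$ are external for $c$ small (the paper encodes this in condition (\ref{eq11}) on $m_1=\max|\xi\eta'-\eta\xi'|$, $m_2=\max|\xi\xi'+\eta\eta'|$), then run the Wa\.zewski connectedness argument on a path $\Gamma$ in $\{t=0\}$ joining a point below $(s_1(0),\dot s_1(0))$ to a point above $(s_2(0),\dot s_2(0))$, deriving a contradiction from a continuous map of a connected set onto the disconnected exit set. Your explicit estimate $\ddot s_i=O(c)$ via the chain rule is a welcome detail the paper leaves implicit, but it does not change the argument.
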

\begin{proof}
By $m_1$ and $m_2$ we denote the following values
$$
m_1 = \max |\xi \eta' - \eta \xi'|, \quad m_2 = \max |\xi\xi' + \eta\eta'|.
$$
Let $c_0$ be such a number that for a any $0 < c < c_0$, we have
\begin{align}
\label{eq11}
|c m_1 - c^2 m_2| < 1.
\end{align}
Let us consider a subset $W$ of the extended phase space:
$$
W = \{ t, s, \dot s \colon t \geqslant 0, s_1(t) \leqslant s \leqslant s_2(t), \dot s \in \mathbb{R} \}.
$$
Note that solution $s(t)$ of (\ref{eq9}) is tangent to $W$ iff either $s(t) = s_1(t)$ and $\dot s(t) = \dot s_1(t)$ or $s(t) = s_2(t)$ and $\dot s(t) = \dot s_2(t)$. From (\ref{eq9}) and (\ref{eq11}) we have that these solutions are externally tangent to $W$. Therefore, if for some solution $s(t)$ we have $s(t) \in (s_1(t), s_2(t))$ for $t \in [0, t')$ and $s(t') = s_1(t')$, then $\dot s(t') < \dot s_1(t')$. Similarly, from $s(t') = s_2(t')$ we obtain $\dot s(t') < \dot s_2(t')$.

Let us now consider a path $\Gamma \colon [0,1] \to W $ in the plane $t = 0$ such that $\Gamma(0) = (0, s_1(0), a)$ and $\Gamma(1) = (0, s_2(0), b)$, where $a < \dot s_1(0)$ and $b > \dot s_2(0)$.

Suppose that every solution starting at $\Gamma$ leaves $W$. Then it is possible to construct a continuous map between $\Gamma$ and $\partial W$. The boundary $\partial W$ is not connected. This contradiction proves the lemma.
\end{proof}

Using Lemma 3.3 similarly to Theorem 3.1 the following can be proved
\begin{theorem}
Suppose that $g(t)$ is periodic and $\bar g = 0$. Then there exist $c$ such that, for any $\varphi(t)$ satisfying (\ref{eq11}), there exists $\lambda_0$ such that for any $\lambda > \lambda_0$, equation (\ref{eq12}) has a solution $s(t)$ satisfying $s(t) \in (s_1(t), s_2(t))$ for all $t > 0$.
\begin{align}
    \label{eq12}
    \ddot s + \ddot\varphi(\xi\eta' - \eta\xi') - \dot\varphi^2(\xi\xi' + \eta\eta') +  (1 + g(\lambda t))(\xi'\sin\varphi + \eta' \cos\varphi) = 0.
\end{align}
\end{theorem}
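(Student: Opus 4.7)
The plan is to follow the pattern established in Theorem 3.1, using Lemma 3.3 as the Wa{\.z}ewski input and Theorem 2.1 to rule out that the solution of the modified equation ever enters the region where the modification was made. Concretely, I would modify (\ref{eq12}) to the form (\ref{eq9}) by inserting a smooth cutoff $\sigma(t,s,\dot s)$ that vanishes in a neighbourhood of the corner curves $(s_i(t),\dot s_i(t))$ and is identically $1$ outside a slightly larger neighbourhood, apply Lemma 3.3 to obtain a solution of the modified equation that stays in the strip $s \in (s_1(t), s_2(t))$, and then use averaging together with continuous dependence for the unperturbed equation (\ref{eq8}) to exclude the possibility that this solution ever visits the modified region.

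First I would fix $c \in (0, c_0)$ small enough that (\ref{eq11}) holds; this guarantees external tangency of (\ref{eq8}) at the corner curves, for every $\varphi$ obeying (\ref{phi_cond}) with that $c$. Because (\ref{phi_cond}) bounds $\dot\varphi$ and $\ddot\varphi$ uniformly in $t$, the right-hand side of (\ref{eq8}) and its $(s,\dot s)$-derivatives remain uniformly bounded on bounded $(s,\dot s)$-sets. Combined with external tangency, continuous dependence on initial data --- applied uniformly in $t_0 \geqslant 0$ --- supplies constants $\delta, \tau, \varepsilon > 0$ such that every solution of (\ref{eq8}) starting at time $t_0$ with $|s(t_0) - s_i(t_0)| \leqslant \delta$ and $|\dot s(t_0) - \dot s_i(t_0)| \leqslant \delta$ leaves the enlarged strip $s \in [s_1(t) - \varepsilon, s_2(t) + \varepsilon]$ on the interval $[t_0, t_0 + \tau]$. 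I would then choose $\sigma$ to vanish on the $\delta/2$-neighbourhoods of the two corner curves and to equal $1$ outside their $\delta$-neighbourhoods, smooth in between; this fulfils the hypotheses of Lemma 3.3, which yields a solution $s(t)$ of the modified equation with $s(t) \in (s_1(t), s_2(t))$ for all $t \geqslant 0$.

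Next I would rewrite (\ref{eq9}) as a first-order system in $(s,p)$ with $p = \dot s$, isolating the oscillating term as $g(\lambda t)\,\sigma(t,s,p)\,B(t,s)$, where $B(t,s) = -(\xi'\sin\varphi + \eta'\cos\varphi)$. Applying Theorem 2.1 in the fast-time form noted in the remark after it --- using $\bar g = 0$, the fact that $\sigma$ depends on $t$ only through $\varphi$ and is therefore slow, and the uniform estimates supplied by (\ref{phi_cond}) --- I would conclude that for $\lambda$ large enough every solution of (\ref{eq9}) stays $(\varepsilon/2)$-close in $(s,\dot s)$ to the solution of (\ref{eq8}) with the same initial data on every interval of length $\tau$. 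If the Lemma-3.3 solution of (\ref{eq9}) ever entered the $\delta$-neighbourhood of a corner curve at some $t_0$, the averaging estimate together with the first step would force it to leave the enlarged strip before $t_0 + \tau$, contradicting Lemma 3.3; hence it remains in the region where $\sigma \equiv 1$, on which (\ref{eq9}) coincides with (\ref{eq12}).

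The main obstacle I anticipate is the averaging step: although $g(\lambda t)$ is the only rapidly oscillating factor, the remaining coefficients of (\ref{eq9}) still depend on $t$ through $\varphi(t)$, so one must check that Theorem 2.1 applies with a threshold $\lambda_0$ that is uniform in the initial time $t_0 \geqslant 0$ and is the same for every $\varphi$ satisfying (\ref{phi_cond}). This is exactly what (\ref{phi_cond}) provides, but the bookkeeping --- making the constants $\delta, \tau, \varepsilon$ from the first step and the $\lambda_0$ from the averaging step independent of $t_0$ and of the particular admissible $\varphi$ --- is the point at which care is required.
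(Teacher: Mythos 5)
Your proposal is correct and follows essentially the same route as the paper: the paper proves this theorem by exactly the combination you describe --- the cutoff modification (\ref{eq9}), Lemma 3.3 as the Wa{\.z}ewski input, and Theorem 2.1 applied near the corner curves $(s_i(t),\dot s_i(t))$ to show the surviving solution never enters the modified region (the paper only sketches this here, but writes it out in full for the periodic analogue, Theorem 3.9, in the same way). Your closing remark about making $\delta,\tau,\varepsilon,\lambda_0$ uniform in $t_0$ and in the admissible $\varphi$ is a point the paper itself glosses over, so flagging it is appropriate rather than a defect.
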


It is also possible to prove the existence of periodic solutions in the above systems. First, let us consider the following auxiliary result.

\begin{theorem}
Let us have a boundary value problem
\begin{align}
    \label{eq13}
    u'' = f(t, u), \quad u(0) = u(T), \quad u'(0) = u'(T).
\end{align}
Here $f \colon \mathbb{R} \times \mathbb{R} \to \mathbb{R}$ is $T$-periodic in $t$. If there exist two $T$-periodic functions $\alpha(t)$ and $\beta(t)$ such that for all $t$
\begin{enumerate}
    \item $\alpha(t) \leqslant \beta(t)$,
    \item $\alpha''(t) \geqslant f(t, \alpha(t))$,
    \item $\beta''(t) \leqslant f(t, \beta(t))$.
\end{enumerate}
Then problem (\ref{eq13}) has a solution $u(t)$ such that $\alpha(t) \leqslant u(t) \leqslant \beta(t)$ for all $t$.
\end{theorem}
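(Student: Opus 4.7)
I would apply the classical method of upper and lower solutions combined with Schauder's fixed point theorem. The idea is to replace (\ref{eq13}) by a modified equation whose right-hand side is globally bounded, solve the modified problem by a fixed-point argument, and then use a maximum-principle style argument to show that any such solution lies in the strip $\alpha(t) \leqslant u(t) \leqslant \beta(t)$ and therefore solves the original problem as well.

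Concretely, define the continuous truncation $p(t,u) = \max\{\alpha(t),\, \min\{u,\, \beta(t)\}\}$ and consider the modified periodic boundary value problem
\begin{align*}
u''(t) - u(t) = f(t, p(t,u(t))) - p(t,u(t)), \quad u(0)=u(T),\ u'(0)=u'(T).
\end{align*}
The linear operator $u \mapsto u''-u$ with periodic boundary conditions has trivial kernel on $[0,T]$, so it is invertible and admits a Green's function $G(t,s)$. Solutions of the modified problem correspond to fixed points of
\begin{align*}
(\mathcal{T} u)(t) = \int_0^T G(t,s)\bigl[f(s, p(s,u(s))) - p(s,u(s))\bigr]\,ds
\end{align*}
on the Banach space $C_T$ of continuous $T$-periodic functions. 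Since $p$ takes values in the compact set $\{(s,v): s\in[0,T],\ \alpha(s)\leqslant v\leqslant\beta(s)\}$ and $f$ is continuous on this set, the integrand is uniformly bounded. A standard Arzel\`a--Ascoli argument makes $\mathcal{T}$ compact and maps every sufficiently large ball in $C_T$ into itself, so Schauder's fixed point theorem furnishes a solution $u$ of the modified problem.

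It remains to verify $\alpha(t)\leqslant u(t)\leqslant\beta(t)$ for every such fixed point. Put $v:=u-\beta$ and suppose, for a contradiction, that $v$ takes positive values somewhere. Since $u$ and $\beta$ are both $T$-periodic, $v$ attains its positive maximum at some $t^*$ which, after periodic extension, is an interior critical point, so $v'(t^*)=0$ and $v''(t^*)\leqslant 0$. At $t^*$ one has $u(t^*)>\beta(t^*)$, hence $p(t^*,u(t^*))=\beta(t^*)$, so the modified equation gives
\begin{align*}
u''(t^*) = u(t^*) + f(t^*,\beta(t^*)) - \beta(t^*),
\end{align*}
and subtracting $\beta''(t^*)\leqslant f(t^*,\beta(t^*))$ yields $v''(t^*)\geqslant v(t^*)>0$, contradicting $v''(t^*)\leqslant 0$. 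The opposite inequality $u\geqslant\alpha$ follows symmetrically, so $p(t,u(t))\equiv u(t)$ and $u$ solves (\ref{eq13}).

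The main delicate point is the design of the modification: the extra $-u$ term on the left-hand side plays two complementary roles. First, it makes the linear part invertible under periodic boundary conditions (the bare operator $u\mapsto u''$ has the constants in its kernel), so the Green's function reformulation makes sense. Second, it produces the strict inequality $v''(t^*)\geqslant v(t^*)>0$ in the maximum-principle step, which closes the argument without any monotonicity hypothesis on $f$ in the variable $u$. The possibility that the maximum of $v$ is attained at $t=0$ or $t=T$ is harmless because the periodicity conditions $v(0)=v(T)$, $v'(0)=v'(T)$ let us pass to the periodic extension and treat $t^*$ as interior.
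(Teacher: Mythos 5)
Your proof is correct. Note that the paper itself offers no proof of this statement: it is quoted as a known auxiliary result (the classical upper-and-lower-solutions theorem for the periodic boundary value problem), so there is no argument of the paper's to compare yours against. Your route --- truncating $f$ to the strip $\alpha(t)\leqslant u\leqslant\beta(t)$, shifting the linear part to $u''-u$ so that the periodic problem is invertible, applying Schauder, and then excluding escape from the strip by the maximum-principle computation $v''(t^*)\geqslant v(t^*)>0$ --- is the standard and complete proof of exactly this statement. The only hypotheses you use beyond what the theorem literally states are continuity of $f$ and $\alpha,\beta\in C^2$; both are covered by the paper's blanket convention that all functions are $C^\infty$, and the latter is already implicit in writing $\alpha''$ and $\beta''$. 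Your closing remark about passing to the periodic extension to treat the maximum point as interior is the right way to handle the endpoint case, and it is legitimate because the fixed point returned by the Green's-function formulation automatically satisfies $u(0)=u(T)$, $u'(0)=u'(T)$, and (by $T$-periodicity of $f$) $u''(0)=u''(T)$.
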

From this result we immediately  have:
\begin{lemma}
Suppose that function $f(t)$ in (\ref{eq5}) is $T$-periodic. Then there exist a $T$-periodic solution $x(t)$ of (\ref{eq5}) such that $0 < x(t) < \pi$ for all $T$.
\end{lemma}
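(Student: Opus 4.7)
My plan is to apply Theorem 3.5 directly to equation (\ref{eq5}), written in the form $\ddot x = F(t,x)$ with $F(t,x) := f(t)\sin x - \cos x$. Since $f$ is $T$-periodic (hence continuous and bounded, say $|f(t)| \leqslant M$ for all $t$), $F$ is $T$-periodic in $t$ as required by the theorem. The strategy is to look for \emph{constant} sub- and super-solutions $\alpha(t) \equiv \alpha_0$ and $\beta(t) \equiv \beta_0$ lying strictly inside $(0,\pi)$, so that the conclusion $\alpha_0 \leqslant x(t) \leqslant \beta_0$ automatically yields $0 < x(t) < \pi$.

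With $\alpha, \beta$ constant, the first condition of Theorem 3.5 is just $\alpha_0 \leqslant \beta_0$, while the second and third reduce to the purely algebraic inequalities
\begin{align*}
\cos\alpha_0 \geqslant f(t)\sin\alpha_0, \qquad f(t)\sin\beta_0 \geqslant \cos\beta_0, \quad \text{for all } t.
\end{align*}
For the first, I would take $\alpha_0 \in (0,\pi/2)$, so $\cos\alpha_0 > 0$ and $\sin\alpha_0 > 0$; the condition becomes $\cot\alpha_0 \geqslant M$, which holds for any sufficiently small $\alpha_0$. For the second, set $\beta_0 = \pi - \varepsilon$ with $\varepsilon \in (0,\pi/2)$; then $\sin\beta_0 = \sin\varepsilon > 0$ and $\cos\beta_0 = -\cos\varepsilon < 0$, and the inequality is trivial when $f(t)\geqslant 0$, and equivalent to $|f(t)|\tan\varepsilon \leqslant 1$ otherwise. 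This is secured uniformly in $t$ by choosing $\varepsilon$ small enough that $M\tan\varepsilon \leqslant 1$. Shrinking $\alpha_0$ and $\varepsilon$ further if necessary ensures $\alpha_0 < \beta_0$ as well.

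With these constants in hand, Theorem 3.5 produces a $T$-periodic solution $x(t)$ of (\ref{eq5}) satisfying $\alpha_0 \leqslant x(t) \leqslant \beta_0$ for all $t$. Since $0 < \alpha_0$ and $\beta_0 < \pi$, the desired strict inclusion $0 < x(t) < \pi$ follows immediately.

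There is no real obstacle in this argument: the whole content is the observation that near $x=0$ and $x=\pi$ the gravity term $-\cos x$ has a definite sign (positive at $x=\pi$, negative at $x=0$) while the horizontal forcing $f(t)\sin x$ is small there, so the bounded forcing cannot overcome the restoring tendency of the gravity term at the endpoints. The boundedness of $f$ is precisely what turns this heuristic into uniform estimates in $t$, and the rest is a one-line application of Theorem 3.5.
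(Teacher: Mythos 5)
Your proof is correct and matches the paper's intent exactly: the paper derives this lemma ``immediately'' from Theorem 3.5, and the constant lower/upper solutions $\alpha_0$ near $0$ and $\beta_0 = \pi - \varepsilon$ near $\pi$, with $\cot\alpha_0 \geqslant M$ and $M\tan\varepsilon \leqslant 1$ where $M = \sup_t|f(t)|$ (finite since $f$ is smooth and periodic by the paper's standing assumptions), are precisely the natural choice that makes the two differential inequalities hold uniformly in $t$. Nothing further is needed.
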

\begin{lemma}
There exists $c > 0$ such that for any $T$-periodic function $\varphi(t)$ satisfying (\ref{eq8}) there exists a $T$-periodic solution $s(t)$ and for all $t$
$$
s_1(t) < s(t) < s_2(t).
$$
\end{lemma}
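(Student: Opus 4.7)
The plan is to apply Theorem 3.5 with $\alpha(t):=s_1(t)$ and $\beta(t):=s_2(t)$, i.e., the very functions that bound the admissible region in Lemma 3.3. Since $\varphi$ is $T$-periodic and the curve is smooth and strictly convex, the implicit function theorem applied to $\xi'(s)\sin\varphi+\eta'(s)\cos\varphi=\mp 1$ shows that $s_1,s_2$ are smooth $T$-periodic functions with $s_1(t)<s_2(t)$ everywhere. Equation (\ref{eq8}) rewrites as $\ddot s = f(t,s)$ with
$$f(t,s) = -\ddot\varphi(\xi\eta'-\eta\xi') + \dot\varphi^{2}(\xi\xi'+\eta\eta') - (\xi'\sin\varphi + \eta'\cos\varphi),$$
which is $T$-periodic in $t$ and independent of $\dot s$, matching the setting of Theorem 3.5.

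The crux is to verify the two differential inequalities $\ddot s_1 \geq f(t,s_1)$ and $\ddot s_2 \leq f(t,s_2)$, i.e.\ that $s_1$ is a sub-solution and $s_2$ a super-solution. This is essentially the external-tangency statement already used in Lemma 3.3. Evaluating $f$ at $s=s_i(t)$, the last term contributes $\pm 1$, while the first two terms are bounded in absolute value by $cm_1+c^{2}m_2$, which by (\ref{eq11}) is strictly less than $1$. On the other hand, differentiating the defining relations $\xi'(s_i)\sin\varphi+\eta'(s_i)\cos\varphi=\mp 1$ once and then twice in $t$, and using the bounds $|\dot\varphi|,|\ddot\varphi|<c$ from (\ref{phi_cond}), one obtains $|\dot s_i|,|\ddot s_i|=O(c)$ with implicit constants depending only on the fixed curve (the strict convexity enters through the non-vanishing denominator $\xi''\sin\varphi+\eta''\cos\varphi$ at $s_i$). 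Shrinking $c$ if necessary, the gap between $\ddot s_i$ and $f(t,s_i)$ is bounded below by a uniform positive constant, so both inequalities hold strictly.

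Theorem 3.5 then yields a $T$-periodic solution $s(t)$ of (\ref{eq8}) with $s_1(t)\leq s(t)\leq s_2(t)$. To upgrade to strict inequalities, suppose $s(t_0)=s_1(t_0)$ for some $t_0$. Then $s-s_1\geq 0$ attains its minimum $0$ at $t_0$, forcing $\dot s(t_0)=\dot s_1(t_0)$ and $\ddot s(t_0)\geq \ddot s_1(t_0)$; but the strict sub-solution inequality at $t_0$ reads $\ddot s(t_0)=f(t_0,s_1(t_0))<\ddot s_1(t_0)$, a contradiction. The case $s(t_0)=s_2(t_0)$ is symmetric, giving $s_1(t)<s(t)<s_2(t)$ for all $t$, which is what we want (in particular the conclusion then holds on any $T$-interval and hence for all $t$ by periodicity).

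I expect the main obstacle to be purely bookkeeping: deriving the estimate $|\ddot s_i|=O(c)$ cleanly from the implicit definition needs two rounds of implicit differentiation together with the convexity hypothesis, and care must be taken that the constants are uniform in the choice of $\varphi$ (only the curve data and the bound $c$ should enter). Once this is in place, the two differential inequalities are immediate from (\ref{eq11}), and both the existence via Theorem 3.5 and the strictness via the local-minimum argument are standard.
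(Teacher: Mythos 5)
Your overall plan --- apply Theorem 3.5 with $\alpha=s_1$, $\beta=s_2$ and upgrade to strict inequalities by the local-minimum argument --- is exactly what the paper intends (it gives no proof, presenting the lemma as an immediate consequence of Theorem 3.5), and the periodicity of $s_1,s_2$, the independence of $f$ from $\dot s$, and the strictness step are fine. But the central step, verifying the two differential inequalities, does not work as written, for two reasons. First, the estimate $|\dot s_i|,|\ddot s_i|=O(c)$ cannot be extracted by differentiating $\xi'(s_i)\sin\varphi+\eta'(s_i)\cos\varphi=\mp1$: since $(\xi',\eta')$ is a unit vector, $\mp1$ is an extremal value of the left-hand side in $s$, so your ``non-vanishing denominator'' $\xi''\sin\varphi+\eta''\cos\varphi$ is in fact identically zero at $s_i$ (the curvature vector there is horizontal), the $\varphi$-derivative $\xi'\cos\varphi-\eta'\sin\varphi$ vanishes there as well, and differentiating the relation in $t$ yields only $0=0$. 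The correct defining relation is $\xi'(s_i)\cos\varphi-\eta'(s_i)\sin\varphi=0$, whose $s$-derivative is $\pm\kappa(s_i)\neq0$ by strict convexity; from it one gets $\dot s_i=\mp\dot\varphi/(\xi''\cos\varphi-\eta''\sin\varphi)=O(c)$ and then $\ddot s_i=O(c)$. So the bound you need is true, but the step you propose to obtain it fails.

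Second, you leave the signs as ``$\pm1$'' and assert both inequalities, whereas the direction actually has to be checked and, with the paper's formulas taken literally, comes out backwards: from (\ref{eq8}), $f(t,s)=-\ddot\varphi(\xi\eta'-\eta\xi')+\dot\varphi^2(\xi\xi'+\eta\eta')-(\xi'\sin\varphi+\eta'\cos\varphi)$, so $f(t,s_1)=1+O(c)$ and $f(t,s_2)=-1+O(c)$, and since $\ddot s_i=O(c)$ the required $\ddot s_1\geq f(t,s_1)$ and $\ddot s_2\leq f(t,s_2)$ both fail for small $c$. This traces to an internal sign inconsistency of the paper itself: (\ref{eq8}) is not the Euler--Lagrange equation of the displayed $T$ and $V$, and with the literal signs the arc $(s_1,s_2)$ is attracting, so the external-tangency claim in Lemma 3.3 would fail too. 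Under the consistent choice of signs, for which $f(t,s_1)=-1+O(c)$ and $f(t,s_2)=1+O(c)$, your inequalities do hold with a uniform margin once $c$ is small enough, and the rest of your argument goes through. A complete proof must fix the convention explicitly and verify the direction of both inequalities rather than asserting them.
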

From Lemma 3.6, similarly to the proof of Theorem 3.1, the following can be proved
\begin{theorem}
Suppose that functions $f(t)$ and $g(t)$ in (\ref{eq6}) are $T$-periodic and $\bar g = 0$. Then there exists $\lambda_0$ such that for any $\lambda \geqslant \lambda_0$, $\lambda \in \mathbb{N}$, equation (\ref{eq6}) has a $T$-periodic solution $x(t)$ and $x(t) \in (0, \pi)$ for all $t$.
\end{theorem}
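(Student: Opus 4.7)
The strategy is to mirror the proof of Theorem 3.1, replacing the Wa{\.z}ewski-type existence of a non-falling solution with the existence of a $T$-periodic solution of the modified equation. First, for $\lambda \in \mathbb{N}$, the function $g(\lambda t)$ has period $T/\lambda$ and is therefore $T$-periodic, so the modified equation (\ref{eq7}) is $T$-periodic in $t$.

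The main work is in producing a $T$-periodic solution $x(t)$ of (\ref{eq7}) with $0 < x(t) < \pi$. I would use $\alpha(t) \equiv 0$ and $\beta(t) \equiv \pi$ as lower and upper solutions: since $\sigma(0, 0) = \sigma(\pi, 0) = 0$, evaluating the right-hand side of (\ref{eq7}) at $(x, \dot x) = (0, 0)$ and $(\pi, 0)$ yields $-1$ and $+1$, so $\alpha'' = 0 \geq -1$ and $\beta'' = 0 \leq +1$. Theorem 3.5 as stated applies only to $u'' = F(t, u)$, whereas (\ref{eq7}) has $\dot x$-dependence through $\sigma$; this is the delicate point. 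One invokes either the standard extension of the upper/lower solutions method to equations $u'' = F(t, u, u')$ with a Nagumo condition on $u'$ (trivially satisfied here, since $F$ is uniformly bounded on $[0, \pi] \times \mathbb{R}$), or a Srzednicki-type topological fixed-point theorem for the Poincar{\'e} map of (\ref{eq7}) on the dynamically convex strip $\{x \in [0, \pi]\}$, as alluded to in the introduction. Either yields a $T$-periodic solution with $0 \leq x(t) \leq \pi$. Strictness follows by a maximum principle argument: if $x(t_0) = 0$ were attained, then $\dot x(t_0) = 0$ and $\ddot x(t_0) \geq 0$, but the equation gives $\ddot x(t_0) = -1 < 0$; the case $x(t_0) = \pi$ is symmetric.

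Finally, I would apply the averaging argument from the proof of Theorem 3.1 to this periodic $x(t)$. Since $x(t) \in [0, \pi]$ and the right-hand side of (\ref{eq7}) is uniformly bounded independent of $\lambda$, the velocity $\dot x(t)$ is a priori bounded uniformly in $\lambda$, so a single $\lambda_0$ suffices for all admissible $\lambda$. For $\lambda \geq \lambda_0$, if at some $t_0$ we had $x(t_0) \in [0, \delta] \cup [\pi - \delta, \pi]$ and $\dot x(t_0) \in [-\delta, \delta]$, then Theorem 2.1 would force $x(t)$ on $[t_0, t_0 + \tau]$ to be $\varepsilon/2$-close to the corresponding solution of (\ref{eq5}), which by the choice of $\delta$ leaves $[0, \pi]$; this contradicts $x(t) \in (0, \pi)$. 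Hence $\sigma(x(t), \dot x(t)) \equiv 1$ along the orbit, and $x(t)$ is in fact a $T$-periodic solution of (\ref{eq6}).
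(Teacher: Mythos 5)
Your proposal is correct and follows essentially the route the paper intends for this theorem (it omits the proof, saying it is ``almost verbatim identical'' to that of Theorem 3.1, with Lemma 3.6/Theorem 3.5 supplying the $T$-periodic solution of the modified equation (\ref{eq7}) and the averaging step then showing that this solution never enters the region where $\sigma\neq 1$). The one place you go beyond the paper is in noticing that Theorem 3.5 is stated for $u''=f(t,u)$ while (\ref{eq7}) depends on $\dot x$ through $\sigma$, and in patching this with the Nagumo-condition version of the upper/lower solution method; this is a genuine gap in the paper's own presentation (its Lemma 3.6 is literally about (\ref{eq5}), not about the modified equation), and your fix, together with the maximum-principle argument for strictness and the remark that $\lambda\in\mathbb{N}$ is what keeps $g(\lambda t)$ $T$-periodic, is exactly what is needed.
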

We omit the proof of this result since it almost verbatim identical to the proof of Theorem 3.1 and present the proof to the following
\begin{theorem}
Suppose that function $g(t)$ in (\ref{eq9}) is $T$-periodic and $\bar g = 0$. Then there exists $c$ such that for any $T$-periodic $\varphi(t)$ satisfying (\ref{eq11}), there exists $\lambda_0$ such that for any $\lambda \geqslant \lambda_0$, $\lambda \in \mathbb{N}$, equation (\ref{eq12}) has a $T$-periodic solution $s(t)$ and $s(t) \in (s_1(t), s_2(t))$ for all $t$.
\end{theorem}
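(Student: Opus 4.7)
The plan is to adapt the averaging-plus-modification scheme of Theorem~3.1 to the $T$-periodic setting, with Lemma~3.7 replacing the elementary Ważewski construction used there. First, since $\varphi(t)$ is $T$-periodic, the tangency curves $s_i(t)$ and $\dot{s}_i(t)$ are $T$-periodic; combined with the external tangency of $W$ established in Lemma~3.3, compactness of $[0,T]$ yields constants $\delta>0$, $\tau>0$, $\varepsilon_1>0$ (uniform in $t_0$) such that every solution $\tilde s(t)$ of the unperturbed equation~(\ref{eq8}) starting at $t_0$ with $|\tilde s(t_0)-s_i(t_0)|\leqslant\delta$ and $|\dot{\tilde s}(t_0)-\dot s_i(t_0)|\leqslant\delta$ leaves the strip $\{s_1(t)<s<s_2(t)\}$ with margin $\varepsilon_1$ at some time in $[t_0,t_0+\tau]$, for $i=1,2$.

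Next, I would pick a smooth cutoff $\sigma(t,s,\dot s)$, $T$-periodic in $t$, equal to $0$ in the $\delta/2$-neighbourhood of each tangency curve $\{(t,s_i(t),\dot s_i(t))\}_{t\in\mathbb{R}}$ and equal to $1$ outside the $\delta$-neighbourhood, and form the modified equation~(\ref{eq9}). Near the tangency curves~(\ref{eq9}) coincides with~(\ref{eq8}), so the external tangency from Lemma~3.3 is preserved and $W$ remains a Ważewski block for~(\ref{eq9}) independently of $\lambda$. When $\lambda\in\mathbb{N}$, equation~(\ref{eq9}) is $T$-periodic in $t$, and applying the Ważewski-block plus Srzednicki-type fixed point argument that underlies Lemma~3.7 (see \cite{srzednicki2005fixed,srzednicki2019periodic,polekhin2015forced}) now to~(\ref{eq9}) yields a $T$-periodic solution $s_*(t)$ with $s_*(t)\in(s_1(t),s_2(t))$ for all $t$.

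Theorem~2.1 then provides $\lambda_0$ such that, for every $\lambda\geqslant\lambda_0$, each solution of~(\ref{eq9}) is within $\varepsilon_1/2$ in both $s$ and $\dot s$ of the solution of~(\ref{eq8}) with the same initial data on every interval of length $\tau$; this applies because, for fixed $(s,\dot s)$, the perturbation $\sigma(t,s,\dot s)g(\lambda t)(\xi'\sin\varphi+\eta'\cos\varphi)$ has vanishing mean value in $t$ (since $\bar g=0$), so the averaged system of~(\ref{eq9}) is exactly~(\ref{eq8}). If $s_*$ were to enter the modification region $\{\sigma<1\}$ at some $t_0$, then on $[t_0,t_0+\tau]$ it would shadow a solution of~(\ref{eq8}) starting within $\delta$ of a tangency curve, which by the first step leaves the strip with margin $\varepsilon_1>\varepsilon_1/2$; hence $s_*$ would also leave the strip, contradicting $s_*(t)\in(s_1(t),s_2(t))$ for all $t$. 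Therefore $s_*$ stays in $\{\sigma=1\}$ for every $t$ and is in fact a $T$-periodic solution of the original equation~(\ref{eq12}) in the required strip.

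The principal obstacle is the second step: transferring the proof of Lemma~3.7 from~(\ref{eq8}) to the modified equation~(\ref{eq9}). That argument rests on a fixed point theorem for the Poincaré map restricted to the set of initial data in $W\cap\{t=0\}$ whose orbits remain in $W$, and this in turn relies on the external tangency of $W$ and the two-component topology of its exit set. Both features are preserved by the construction of $\sigma$, which vanishes on the tangency curves, so the fixed point argument goes through without change. Once the $T$-periodic solution of~(\ref{eq9}) is in hand, the remainder of the proof is the same averaging-plus-shadowing reduction used in Theorem~3.1.
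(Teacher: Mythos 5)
Your proposal follows essentially the same route as the paper's own proof: uniform exit estimates for the unperturbed equation~(\ref{eq8}) near the tangency curves, a cutoff $\sigma$ vanishing in a neighbourhood of those curves, the periodic analogue of the Wa\.{z}ewski argument (Lemma~3.7) applied to the modified equation~(\ref{eq9}), and the averaging theorem to show the resulting $T$-periodic solution never enters the modified region. Your added care in checking that the fixed-point argument behind Lemma~3.7 survives the modification (because $\sigma$ vanishes on the tangency curves, preserving external tangency) is a point the paper passes over silently, but it is the same argument.
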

\begin{proof}
The general scheme of the proof coincides with the one for Theorem 3.1. First, from equation (\ref{eq8}) it follows that there exist $\delta, \varepsilon, \tau > 0$ such that any solution $\tilde{s}(t)$ of (\ref{eq8}) starting at $\tilde{s}(t_0) \in [s_1(t_0), s_1(t_0) + \delta]$, $\dot{\tilde{s}}(t_0) \in [-\delta + \dot s_1(t_0), \delta + s_1(t_0)]$ satisfies the following condition. For some $t' \in [0, \tau]$ $\tilde s(t_0 + t') = s_1(t_0 + t') - \varepsilon$. Similar condition holds for $\tilde{s}(t_0) \in [s_2(t_0) - \delta, s_2(t_0)]$, $\dot{\tilde{s}}(t_0) \in [-\delta + \dot s_2(t_0), \delta + s_2(t_0)]$.

Now consider a modified equation (\ref{eq9}). Here again $\sigma(s, \dot s)$ is a `step' smooth function with bounded derivatives. However, now this function depends on curves $s_1(t)$ and $s_2(t)$ (i.e., since the form of the curve is fixed, on the law of rotation $\varphi(t)$):
$$
\sigma(t, s, \dot s)=
\left\{
		\begin{array}{lll}
			0 & \mbox{if }& s \in [s_1(t) - \frac{\delta}{2}, s_1(t) + \frac{\delta}{2}] \mbox{ and } \dot s \in [\dot s_1(t) - \frac{\delta}{2}, \dot s_1(t) + \frac{\delta}{2}]\\
			&&\mbox{or } \\
			 & \mbox{ }& s \in [s_2(t) - \frac{\delta}{2}, s_2(t) + \frac{\delta}{2}] \mbox{ and } \dot s \in [\dot s_2(t) - \frac{\delta}{2}, \dot s_2(t) + \frac{\delta}{2}],\\
			1 & \mbox{if }& s \not\in [s_1(t) - \delta, s_1(t) + \delta] \mbox{ or } \dot s \in [\dot s_1(t) - \delta, \dot s_1(t) + \delta]\\
			&&\mbox{and } \\
			 & \mbox{ }& s \not\in [s_2(t) - \delta, s_2(t) + \delta] \mbox{ or } \dot s \in [\dot s_2(t) - \delta, \dot s_2(t) + \delta].
		\end{array}
	\right.
$$
And again suppose that $\sigma(t,s,\dot s)$ is smooth elsewhere. From Lemma 3.7, for any $\lambda$, there exists a $T$-periodic solution $s(t)$ of the modified equation. Suppose that for some $t_0$ we have ${s}(t_0) \in [s_1(t_0), s_1(t_0) + \delta]$, $\dot{{s}}(t_0) \in [-\delta + \dot s_1(t_0), \delta + s_1(t_0)]$ or ${s}(t_0) \in [s_2(t_0) - \delta, s_2(t_0)]$, $\dot{{s}}(t_0) \in [-\delta + \dot s_2(t_0), \delta + s_2(t_0)]$. From Theorem 2.1 and the choice of $\delta$, we have that $s(t_0+t') \not\in (s_1(t_0+t'), s_2(t_0+t'))$ for some $t'$. The contradiction proves the theorem.
\end{proof}
\section{Conclusion}
In \cite{bogolyubov1961asymptotic} N.N. Bogolyubov and Y. A. Mitropolskij wrote: `\textit{One can, for instance, try to find conditions under which the difference between the exact solution and its asymptotic approximation, for small values of the parameter, becomes arbitrarily small on an arbitrarily long, yet finite, time interval. It is also possible to consider far more difficult problems trying to find a correspondence between such properties of the exact and asymptotic solutions that depends on their behaviour on the infinite time interval.}'

In other words, the problem of averaging on the infinite time interval is more complicated and can hardly be solved in a usual way. Some results on the matter can be found in \cite{sanders2007averaging,burd2007method}. For instance, some results on the averaging for the infinite time interval can be obtained in the presence of an asymptotically stable solution.

In our work we present a novel approach to this type of problems. We do not consider a small vicinity of an exact solution, but consider a relatively large subset of the extended phase space and we suppose that the flow of the unperturbed (averaged) system does not have trajectories internally tangent to the boundary of our set. Then it is possible to show that, for the original non-autonomous system, there exists a solution that always remains in the considered subset. A solution with the same property will also exist for the averaged system. Moreover, we show that, provided the perturbation is periodic, there will be two periodic solution for the corresponding systems. We cannot say that these solutions are close, but, at least, we can say that always stays in some known region.

In the paper we considered only two simple mechanical systems, yet we believe that this approach can be applied in various systems and can be generalized in different ways.


\section*{References}

\bibliographystyle{model1-num-names}
\bibliography{sample}







\end{document}